\newtheorem{theorem}{Theorem}[section]
\newtheorem{lemma}[theorem]{Lemma}
\theoremstyle{definition}
\newtheorem{definition}[theorem]{Definition}
\theoremstyle{remark}
\numberwithin{equation}{section}
\newcommand\nutwid{\overset {\text{\lower 3pt\hbox{$\sim$}}}\nu}
\newcommand\omycite[1]{}
\newcommand{\beqs}{\begin{equation*}}
\newcommand{\eeqs}{\end{equation*}}
\newcommand{\beq}{\begin{equation}}
\newcommand{\eeq}{\end{equation}}
\begin{document}
\title[Combinatorial proof of identities]{Combinatorial proof of identities involving partitions with distinct even parts and 4-regular partitions}


\author{Dandan Chen}
\address{Department of Mathematics, Shanghai University, People's Republic of China}
\address{Newtouch Center for Mathematics of Shanghai University, Shanghai, People's Republic of China}
\email{mathcdd@shu.edu.cn}
\author{Ziyin Zou}
\address{Department of Mathematics, Shanghai University, People's Republic of China}
\email{ziyinzou@126.com}


\subjclass[2010]{ 11P81; 05A17; 11D09.}

\date{}


\keywords{integer partitions, q-series.}

\begin{abstract}
Recently Andrews and El Bachraoui proved identities relating certain restricted partitions into distinct even parts with restricted
4-regular partitions by the theory of basic hypergeometric series. They also posed a question regarding combinatorial proofs for these results. In this paper, we establish bijections to provide combinatorial proofs for these results.
\end{abstract}

\maketitle


\section{Introduction}

In 2009, Andrews \cite{Andrews-09} denoted the ped($n$) as the number of partitions of $n$ with distinct even parts and found that
\begin{align}\label{ped}
\sum_{n=0}^{\infty}\text{ped}(n)q^n=\frac{(-q^2;q^2)_{\infty}}{(q;q^2)_{\infty}}=\frac{(q^4;q^4)_{\infty}}{(q;q)_{\infty}},\quad \text{for $|q|<1$},
\end{align}
where the $q$-shifted factorial is defined by \cite{GR}
\begin{align*}
(a;q)_0=1, \; (a;q)_n=\prod_{j=0}^{n-1}(1-aq^j), \;
(a;q)_{\infty}=\prod_{j=0}^{\infty}(1-aq^j) \;\text{for $|q|<1$}.
\end{align*}
From \eqref{ped} we know that ped($n$) equals the number of partitions of $n$ whose parts are not divisible by $4$. These partitions and their arithmetic properties have been studied extensively in recent years, see for instance \cite{Andrews-Hirschhorn-Sellers,Chen,Lovejoy-12}.

Recently, Andrews and El Bachraoui \cite{Andrews-Bachraoui} used the formula in  \cite{Andrews-Subbarao-Vidyasagar}
\begin{align*}
\sum_{n=0}^{\infty}\frac{(a;q)_n}{(b;q)_n}q^n
=\frac{q(a;q)_\infty}{b(b;q)_\infty(1-{aq}/{b})}+\frac{1-q/b}{1-{aq}/b}, \;
 \left|q\right|<1,
\end{align*}
to prove more identities related to restricted partitions with distinct even parts and restricted $4$-regular partitions, where the $l$-regular partition is a partition whose summands are not divisible by $l$.

Motivated by the works of Andrews and El Bachraoui \cite{Andrews-Bachraoui}, we present a bijective proof of the following theorems.
\begin{theorem}\:\! \emph{(\!\cite{Andrews-Bachraoui}, Eq.(2.2))\label{thm-DE1}}.
For $|q|<1$, there holds
\begin{align}\label{theorem1}
 (1+q)\sum_{n\ge0}\frac{(-q^2;q^2)_n q^{2n+1}}{(q;q^2)_{n+1}}=\frac{(q^4;q^4)_{\infty}}{(q;q)_{\infty}}-1.
\end{align}
\end{theorem}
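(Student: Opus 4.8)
The plan is to interpret both sides of \eqref{theorem1} as generating functions for partitions with distinct even parts and then match them by an explicit bijection. By \eqref{ped} the series $\frac{(q^4;q^4)_\infty}{(q;q)_\infty}$ equals $\sum_{n\ge0}\mathrm{ped}(n)q^n$, so the right-hand side of \eqref{theorem1} is the generating function for the partitions counted by $\mathrm{ped}$ that have at least one part, graded by the integer partitioned. I would split this family according to the parity of the largest part and show that the two pieces correspond, respectively, to the two series obtained by writing $(1+q)=q^0+q^1$ on the left, i.e.
\[
(1+q)\sum_{n\ge0}\frac{(-q^2;q^2)_nq^{2n+1}}{(q;q^2)_{n+1}}
=\sum_{n\ge0}\frac{(-q^2;q^2)_nq^{2n+1}}{(q;q^2)_{n+1}}+\sum_{n\ge0}\frac{(-q^2;q^2)_nq^{2n+2}}{(q;q^2)_{n+1}}.
\]

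First I would read off the first series factor by factor. Fix $n\ge0$. The factor $(-q^2;q^2)_n=\prod_{j=1}^{n}(1+q^{2j})$ generates partitions into distinct even parts from $\{2,4,\dots,2n\}$; the factor $q^{2n+1}$ inserts one mandatory part equal to $2n+1$; and $\frac{1}{(q;q^2)_{n+1}}=\prod_{j=0}^{n}\frac{1}{1-q^{2j+1}}$ generates partitions into odd parts from $\{1,3,\dots,2n+1\}$ with unrestricted multiplicities. Overlaying these three partitions produces a partition with distinct even parts whose largest part is exactly the odd number $2n+1$ (the even parts being $\le 2n$ and the odd parts $\le 2n+1$), and this map is reversible: given a partition with distinct even parts and largest part $2n+1$, the index $n$ is forced, one copy of $2n+1$ is returned to the $q^{2n+1}$ factor, the remaining odd parts to $\frac{1}{(q;q^2)_{n+1}}$, and the even parts to $(-q^2;q^2)_n$. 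Hence $\sum_{n\ge0}\frac{(-q^2;q^2)_nq^{2n+1}}{(q;q^2)_{n+1}}$ enumerates partitions with distinct even parts whose largest part is odd.

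For the second series I would rewrite $q\cdot\frac{(-q^2;q^2)_nq^{2n+1}}{(q;q^2)_{n+1}}=\frac{(-q^2;q^2)_nq^{2n+2}}{(q;q^2)_{n+1}}$ and repeat the argument with the distinguished part now the even number $2n+2$, which is again larger than every part contributed by $(-q^2;q^2)_n$ (at most $2n$) and by $\frac{1}{(q;q^2)_{n+1}}$ (at most $2n+1$); this yields a bijection onto partitions with distinct even parts whose largest part is even. Adding the two cases, the left-hand side of \eqref{theorem1} enumerates exactly the partitions with distinct even parts having at least one part, which is the right-hand side. Assembling the even/odd split together with the two factor-by-factor decompositions into a single map with an explicit inverse completes the combinatorial proof.

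I do not expect a genuine obstacle; the only point requiring care is the bookkeeping that guarantees the distinguished part (the $2n+1$ in the first case, the $2n+2$ in the second) really is the largest part, so that the correspondence is well defined and invertible in both directions — in particular that $2n+1$ is allowed to reoccur with higher multiplicity inside $\frac{1}{(q;q^2)_{n+1}}$ in the first case, whereas $2n+2$ must occur exactly once in the second since even parts are distinct.
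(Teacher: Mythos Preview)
Your proposal is correct and follows essentially the same route as the paper: the paper interprets $\sum_{n\ge0}\frac{(-q^2;q^2)_nq^{2n+1}}{(q;q^2)_{n+1}}$ as the generating function for $\mathrm{DE1}(n)$ (ped partitions with odd largest part) and then proves $\mathrm{DE1}(n)+\mathrm{DE1}(n-1)=\mathrm{ped}(n)$ via the bijection that subtracts $1$ from the largest part when it is even---exactly your parity-of-largest-part split. The only cosmetic difference is that you read the $q$-shifted summand directly as contributing a distinguished even largest part $2n+2$, whereas the paper first lands in $\pi_{\mathrm{DE1}}(n-1)$ and then adds $1$ back; the two descriptions are the same bijection.
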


\begin{theorem}\:\! \emph{(\!\cite{Andrews-Bachraoui}, Eq.(2.4))\label{thm-DE2}}.
For $|q|<1$, there holds
\begin{align}\label{theorem3}
 (1+q^3)\sum_{n\ge0}\frac{(-q^2;q^2)_n q^{4n+2}}{(q;q^2)_{n+1}}=\frac{(q^4;q^4)_{\infty}}{(q^2;q)_{\infty}}-1.
\end{align}
\end{theorem}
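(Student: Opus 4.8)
The plan is to deduce \eqref{theorem3} from Theorem~\ref{thm-DE1} by means of two elementary bijections, after first reading both series as statements about partitions with distinct even parts (and unrestricted odd parts), which I shall call \emph{ped partitions} in accordance with \eqref{ped}. Since $q^{2n+1}/(q;q^2)_{n+1}$ generates partitions into odd parts $\le 2n+1$ having at least one part equal to $2n+1$, since $(-q^2;q^2)_n$ generates partitions into distinct even parts $\le 2n$, and since $n$ is recovered from the largest part as $(\text{largest part}-1)/2$, the series $\Sigma_1:=\sum_{n\ge0}\frac{(-q^2;q^2)_nq^{2n+1}}{(q;q^2)_{n+1}}$ occurring in Theorem~\ref{thm-DE1} is the generating function for the set $B$ of ped partitions whose largest part is odd. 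Replacing $q^{2n+1}$ by $q^{4n+2}=q^{2(2n+1)}$ forces a second copy of $2n+1$, so the series on the left of \eqref{theorem3}, namely $(1+q^3)\Sigma_2$ with $\Sigma_2:=\sum_{n\ge0}\frac{(-q^2;q^2)_nq^{4n+2}}{(q;q^2)_{n+1}}$, is built from the set $A\subseteq B$ of ped partitions whose largest part is odd and occurs with multiplicity at least $2$; consequently $B\setminus A$ is the set of ped partitions whose largest part is odd of multiplicity exactly $1$.

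The heart of the proof will be a bijective verification of the ``bridge identity'' $(1-q^2)\Sigma_1=q+(1+q^3)\Sigma_2$. First, I would let $\sigma$ send a ped partition in $B$ with odd largest part $\ell$ to the one obtained by replacing a single copy of $\ell$ with $\ell+2$; this is a size-$(+2)$ injection of $B$ into itself whose image $B'$ is exactly the set of ped partitions with odd largest part $\ell'\ge3$ of multiplicity $1$ and with no part equal to $\ell'-1$ (this ``gap'' is what makes $\sigma$ reversible: undo it by replacing the unique part $\ell'$ with $\ell'-2$). Hence $(1-q^2)\Sigma_1$ is the generating function for $B\setminus B'$, and a short inspection of the largest part shows $B\setminus B'=A\sqcup D\sqcup\{(1)\}$, where $(1)$ is the partition with a single part equal to $1$ and $D$ is the set of ped partitions whose largest part $2m+1\ge3$ is odd of multiplicity $1$ and which contain a part equal to $2m$. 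Second, I would let $\Psi$ replace the two parts $2m+1,2m$ of a partition in $D$ by two copies of $2m-1$; one checks that this lowers the size by $3$, lowers the largest part to $2m-1$ (still odd, now of multiplicity $\ge2$), and keeps the even parts distinct, so $\Psi$ is a bijection of $D$ onto $A$ that lowers the size by $3$, its inverse replacing two copies of the odd largest part $2M+1$ of a partition in $A$ by one part $2M+2$ and one part $2M+3$. Summing generating functions gives $(1-q^2)\Sigma_1=q+\Sigma_2+q^3\Sigma_2$.

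It then remains only to combine this with Theorem~\ref{thm-DE1}: by \eqref{theorem1} and $(q;q)_\infty=(1-q)(q^2;q)_\infty$,
\[
(1+q^3)\Sigma_2=(1-q^2)\Sigma_1-q=(1-q)(1+q)\Sigma_1-q=(1-q)\!\left(\frac{(q^4;q^4)_\infty}{(q;q)_\infty}-1\right)-q=\frac{(q^4;q^4)_\infty}{(q^2;q)_\infty}-1,
\]
which is \eqref{theorem3}.

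The step I expect to require the most care is establishing the two bijections precisely --- in particular pinning down the image $B'$ of $\sigma$ (the ``no part equal to $\ell'-1$'' condition) and checking that $\Psi$ and $\Psi^{-1}$ are everywhere defined, the degenerate case being the all-ones partitions $1^k\in A$ with $k\ge2$, which $\Psi^{-1}$ must send to $3+2+1^{k-2}\in D$. A more self-contained alternative would be to exhibit directly a bijection between the nonempty ped partitions with no part equal to $1$ --- which $\frac{(q^4;q^4)_\infty}{(q^2;q)_\infty}-1=\frac{(-q^2;q^2)_\infty}{(q^3;q^2)_\infty}-1$ enumerates --- and $A$ together with a size-shifted-by-$3$ copy of $A$; but the asymmetry in how parts equal to $1$ are treated on the two sides makes this route look less clean, so going through Theorem~\ref{thm-DE1} seems the better path.
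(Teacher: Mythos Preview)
Your argument is correct, and the two bijections $\sigma$ and $\Psi$ do exactly what you claim; in particular the analysis of the image $B'$ and the degenerate case $1^k\mapsto 3+2+1^{k-2}$ are fine, and the final algebra recovers \eqref{theorem3}. The route, however, is genuinely different from the paper's. Both proofs pass through the same intermediate identity $\text{DE1}(n)-\text{DE1}(n-2)=\text{DE2}(n)+\text{DE2}(n-3)$ (for $n\ge2$), but the paper obtains it by writing $\text{DE2}=\text{DE1}-\text{DE3}$ and invoking \emph{both} bijections $\phi_1$ and $\phi_3$ (Lemmas \ref{lem-DE1} and \ref{lem-DE3}), then finishes with $\text{ped}(n)-\text{ped}(n-1)=\text{ped}_{n>1}(n)$. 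You instead prove that intermediate identity directly with the size-changing bijections $\sigma:B\to B'$ and $\Psi:D\to A$, and then appeal only to Theorem~\ref{thm-DE1}. The upshot is that your proof is independent of Theorem~\ref{thm-DE3} and supplies an explicit combinatorial reason (namely $\Psi$) for the factor $1+q^3$, while the paper's proof is more economical in that it simply recombines the two lemmas already on hand and thereby ties all three results $\text{DE1}$, $\text{DE2}$, $\text{DE3}$ together.
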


\begin{theorem}\:\! \emph{(\!\cite{Andrews-Bachraoui}, Eq.(2.6))\label{thm-DE3}}.
For $|q|<1$, there holds
\begin{align}\label{theorem2}
 (1+q^3)\sum_{n\ge0}\frac{(-q^2;q^2)_n q^{2n+1}}{(q;q^2)_{n}}=\frac{q^2(q^4;q^4)_{\infty}}{(q;q)_{\infty}}-q^2+q.
\end{align}
\end{theorem}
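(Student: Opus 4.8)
\emph{Interpreting the two sides.} The plan is to read both sides of~\eqref{theorem2} as generating functions for explicit families of partitions and then match them by a weight-preserving bijection, where the weight of a partition is the sum of its parts. Set $P(q)=(q^4;q^4)_\infty/(q;q)_\infty$; by~\eqref{ped} this is the generating function for partitions with distinct even parts, equivalently for $4$-regular partitions. The right-hand side of~\eqref{theorem2} then equals $q+q^2\bigl(P(q)-1\bigr)$, the generating function for the disjoint union of the partition $(1)$ and all pairs $(\pi,\mathbf 2)$ in which $\pi$ is a \emph{nonempty} partition with distinct even parts and the symbol $\mathbf 2$ carries weight $2$. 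On the left, $(-q^2;q^2)_n$ generates partitions into distinct even parts $\le 2n$, while $q^{2n+1}/(q;q^2)_n$ generates partitions into odd parts with a marked part equal to $2n+1$ and all remaining parts $\le 2n-1$ --- since $2n+1>2n-1$, exactly the partitions into odd parts whose largest part is $2n+1$ and is unrepeated. Hence, summing over $n$, the factor $\sum_{n\ge0}(-q^2;q^2)_nq^{2n+1}/(q;q^2)_n$ generates pairs $(\mu,\lambda)$ with $\lambda$ a nonempty partition into odd parts whose largest part $\ell$ is unrepeated, $\mu$ a partition with distinct even parts, and every part of $\mu$ smaller than $\ell$; the prefactor $1+q^3$ attaches a factor $q^{\epsilon}$ with $\epsilon\in\{0,3\}$.

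\emph{Compressing the left side.} First I would collapse the triple $(\epsilon,\mu,\lambda)$. Merge $\mu$ with the parts of $\lambda$ lying below $\ell$ into one partition $\rho$ with distinct even parts, all of whose parts are $<\ell$; since $\ell$ is odd this is reversible ($\mu$ is the set of even parts of $\rho$, and the below-$\ell$ part of $\lambda$ is the multiset of odd parts of $\rho$), and the weight becomes $\epsilon+|\rho|+\ell$. Now $(\epsilon,\ell)\mapsto k:=\epsilon+\ell$ is a bijection from $\{0,3\}\times\{1,3,5,\dots\}$ onto $\mathbb Z_{\ge1}\setminus\{2\}$ (odd $k\ge1$ arising only from $\epsilon=0$, even $k\ge4$ only from $\epsilon=3$), and under it the condition ``every part of $\rho$ is $<\ell$'' becomes $\max\rho\le k-1$ when $k$ is odd and $\max\rho\le k-4$ when $k$ is even. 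So the left side of~\eqref{theorem2} is the generating function for all pairs $(k,\rho)$ with $k\in\mathbb Z_{\ge1}\setminus\{2\}$ and $\rho$ a partition with distinct even parts obeying the stated bound, weighted by $k+|\rho|$.

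\emph{The bijection.} Send the unique pair with $k=1$ (which forces $\rho=\emptyset$, weight $1$) to the partition $(1)$, and for $k\ge3$ send $(k,\rho)$ to the pair $(\pi,\mathbf 2)$ with $\pi=\rho\cup\{k-2\}$ (multiset union). Then $|\pi|+2=|\rho|+k$ as needed, and $\pi$ is a nonempty partition with distinct even parts: for odd $k$ we adjoin the odd part $k-2$, and for even $k$ the new largest even part $k-2$, legitimate since then $\max\rho\le k-4<k-2$. To invert, take a nonempty partition $\pi$ with distinct even parts and largest part $m$: if $m$ is odd, set $k=m+2$ and delete one copy of $m$ from $\pi$; if $m$ is even and $m-1$ is a part of $\pi$, set $k=m+1$ and delete one copy of $m-1$; if $m$ is even and $m-1$ is not a part of $\pi$, set $k=m+2$ and delete the part $m$. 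These cases are exhaustive and mutually exclusive, the resulting $\rho$ satisfies the required bound in each, and the two maps are mutually inverse; decoding $k$ back to $(\epsilon,\ell)$ and $\rho$ back to $(\mu,\lambda)$ then completes the proof.

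\emph{The main difficulty.} The delicate step is the inversion: the forward rule $\pi=\rho\cup\{k-2\}$ is uniform, but $k$ cannot be recovered from $m=\max\pi$ alone, because when $k$ is odd and $\rho$ happens to contain the even part $k-1$ the image $\pi$ has \emph{even} largest part $k-1$, which collides with the even-$k$ images (whose largest part is the even number $k-2$). Showing that the remedy --- reading $k$ off from whether or not $m-1$ is a part of $\pi$ --- is forced by the two bounds $\max\rho\le k-1$ and $\max\rho\le k-4$, and checking that no object of weight $2$ is produced (this is the ``$-q^2$''), is where the genuine bookkeeping lies. As a cross-check, \eqref{theorem2} also follows algebraically from Theorems~\ref{thm-DE1} and~\ref{thm-DE2} via the elementary splitting $q^{2n+1}/(q;q^2)_n=\bigl(q^{2n+1}-q^{4n+2}\bigr)/(q;q^2)_{n+1}$ together with $1+q^3=(1+q)(1-q+q^2)$, but a truly combinatorial argument calls for the bijection above.
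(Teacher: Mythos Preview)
Your proof is correct, and once your encoding is unpacked it is exactly the paper's bijection: the paper proves Lemma~\ref{lem-DE3}, namely $\text{ped}(n)=\text{DE3}(n+2)+\text{DE3}(n-1)$, via a three-case map $\phi_3$ on a ped partition with largest part $\lambda_1$ (add $2$ to $\lambda_1$ if $\lambda_1$ is odd; if $\lambda_1$ is even and $\lambda_2=\lambda_1-1$, add $2$ to $\lambda_2$; if $\lambda_1$ is even and $\lambda_2<\lambda_1-1$, subtract $1$ from $\lambda_1$), and your inverse map on $\pi$ with largest part $m$ reproduces these three cases verbatim, with your $k=\ell+\epsilon$ just packaging which of the two copies $\pi_{DE3}(n+2)$, $\pi_{DE3}(n-1)$ one lands in. The only cosmetic difference is that you phrase the forward direction as the single rule $\pi=\rho\cup\{k-2\}$ and defer the case split to the inverse, whereas the paper splits cases in the forward direction; the combinatorial content is identical.
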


In Section 2, we establish  bijections  to provide proofs of the combinatorial for  \eqref{theorem1}-- \eqref{theorem2},respectively.

\section{COMBINATORIAL PROOF OF THEOREM \ref{thm-DE1}--\ref{thm-DE3}}
A partition $\lambda$ of a positive integer $n$ is a finite non-increasing sequence of positive integers $\lambda=(\lambda_1,\lambda_2,\ldots,\lambda_k)$ such that $\sum\limits_{i=1}^{k}\lambda_{i}=n.$
\begin{definition} \emph{\cite{Andrews-Bachraoui}}.
Let DE1($n$) denote the number of partitions of $n$ in which no even part is repeated and the largest part is odd. Then
\begin{align}\label{DE1}
\sum_{n=0}^{\infty}\text{DE1}(n)q^n=\sum_{n=0}^{\infty}\frac{(-q^2;q^2)_nq^{2n+1}}{(q;q^2)_{n+1}}.
\end{align}
\end{definition}
\begin{definition} \emph{\cite{Andrews-Bachraoui}}.
Let DE2($n$) denote the number of partitions of $n$ in which no even part is repeated and the largest part is odd and appears at least twice. Then
\begin{align}\label{DE2}
\sum_{n=0}^{\infty}\text{DE2}(n)q^n=\sum_{n=0}^{\infty}\frac{(-q^2;q^2)_nq^{4n+2}}{(q;q^2)_{n+1}}.
\end{align}
\end{definition}
\begin{definition} \emph{\cite{Andrews-Bachraoui}}.
Let DE3($n$) denote the number of partitions of $n$ in which no even part is repeated and the largest part is odd and appears exactly once. Then
\begin{align}\label{DE3}
\sum_{n=0}^{\infty}\text{DE3}(n)q^n=\sum_{n=0}^{\infty}\frac{(-q^2;q^2)_nq^{2n+1}}{(q;q^2)_{n}}.
\end{align}
\end{definition}

We define $\pi_{ped}(n)$, $\pi_{DE1}(n)$, $\pi_{DE2}(n)$, $\pi_{DE3}(n)$ as the sets of ped partitions, DE1 partitions, DE2 partitions and DE3 partitions of $n$ respectively. We also let $\text{ped}_{n>1}(n)$ denote the number of partitions of $n$ in which each part is  strictly larger than $1$.

Now  we  provide  proofs of the combinatorial for  \eqref{theorem1}--\eqref{theorem2}, respectively.
\begin{proof}[Proof of Theorem \emph{1.1}]
\begin{lemma}\label{lem-DE1}
There exists a bijection $\phi_1$: $\pi_{ped}(n)\rightarrow \pi_{DE1}(n)\cup\pi_{DE1}(n-1)$ such that $\text{DE}1(n)+\text{DE}1(n-1)=\text{ped}(n)$, i.e. $\text{DE1}(n)+\text{DE1}(n-1)$ equals the number of $4$-regular partitions of $n$.
\end{lemma}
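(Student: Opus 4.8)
The plan is to exhibit the bijection $\phi_1$ explicitly by combining two classical ideas: Glaisher-type splitting of parts (to pass between ``distinct even parts'' and ``no part divisible by $4$'') and a careful treatment of the largest-part-odd condition. Recall from \eqref{ped} that $\mathrm{ped}(n)$ counts $4$-regular partitions of $n$, so it suffices to biject $4$-regular partitions of $n$ with the disjoint union $\pi_{DE1}(n)\sqcup\pi_{DE1}(n-1)$. A DE1 partition is one with distinct even parts whose largest part is odd; equivalently, writing its parts, the odd parts form an arbitrary partition into odd parts with an odd maximum, and the even parts are distinct. So I would first set up the standard bijection between partitions with distinct even parts and $4$-regular partitions: repeatedly replace each even part $2^a m$ ($m$ odd) that would be forced to repeat, or rather, use the involution that writes every part not divisible by $4$ uniquely, and run Glaisher's map $2j \mapsto j+j$ in reverse to absorb multiplicities. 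Concretely, the generating-function identity \eqref{ped} already gives $|\pi_{ped}(n)| = $ (number of $4$-regular partitions of $n$); the real content is the split according to whether the largest part is odd.

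The key steps, in order, are as follows. First I would partition $\pi_{ped}(n)$ into two classes: $A_n$, those ped partitions whose largest part is odd, and $B_n$, those whose largest part is even. For class $A_n$: a ped partition with odd largest part is already almost a DE1 partition of $n$ — indeed it has distinct even parts and odd largest part — so the identity map (or at most a harmless normalization) should give $A_n \cong \pi_{DE1}(n)$ directly. For class $B_n$: a ped partition with even largest part $2t$; here I would peel off the even largest part, use the fact that it is distinct, and transfer one unit so as to land in $\pi_{DE1}(n-1)$. The natural device is: since the largest part $\lambda_1 = 2t$ is even and unrepeated, map $\lambda$ to the partition obtained by deleting $\lambda_1$, adjusting by conjugation or by Glaisher on the remaining even parts, and appending parts so that the result has an odd largest part and distinct even parts and size $n-1$. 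One clean realization: send $2t$ to a ``staircase'' of odd parts or to $(2t-1)$ plus a carried $1$, tracked through the $4$-regular side. Because the subtle bookkeeping is exactly what makes $n$ drop to $n-1$, I would instead route everything through the $4$-regular model: show that $4$-regular partitions of $n$ with an extra marked structure split as (those ``of type I'') $\cong \pi_{DE1}(n)$ and (those ``of type II'') $\cong \pi_{DE1}(n-1)$, using the explicit $q$-series in \eqref{DE1} to predict the combinatorial statistic ($2n+1$ in the exponent signals an odd largest part of size $2k+1$ with $k\ge 0$, and $(q;q^2)_{n+1}$ in the denominator signals odd parts $\le 2n+1$ appearing freely).

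Therefore my concrete construction is: given a $4$-regular partition $\mu$ of $n$, apply the inverse Glaisher map to collect odd parts, producing a partition $\nu$ with distinct even parts; if $\nu$ has odd largest part, output $(\nu,n)\in\pi_{DE1}(n)$; if $\nu$ has even largest part $2t$, replace $2t$ by the two parts $2t-1$ and $1$ — this makes the largest part odd and reduces the recorded size to $n$ while creating a DE1 partition; to hit $\pi_{DE1}(n-1)$ instead, remove a single $1$ at the end, which is reversible because we can always detect the ``artificial'' unit. I would then verify injectivity and surjectivity by describing the inverse: given a DE1 partition of $n$, check whether it arose from an even largest part (detectable since then it contains the specific pattern $2t-1$ with a companion, with no part $2t$ and with $2t$ not otherwise blocked), and if so fuse $2t-1,1 \mapsto 2t$; given a DE1 partition of $n-1$, adjoin the tracking $1$ and do the same. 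The main obstacle I anticipate is making the case split watertight: I must ensure the ``fuse/unfuse'' rule on the largest odd part never collides with a genuine DE1 partition that already had that shape, i.e.\ that the marked unit is always unambiguously recoverable, and that distinctness of even parts is preserved in both directions. Handling that boundary case — typically by a parity or size argument pinning down exactly when $2t-1$ is the largest part versus when a smaller odd part dominates — is where the careful work lies; the Glaisher and generating-function parts are routine once the statistic dictated by \eqref{DE1} is read off correctly.
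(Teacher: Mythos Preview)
Your core idea --- split $\pi_{ped}(n)$ by the parity of the largest part, and send the odd-largest class identically to $\pi_{DE1}(n)$ --- is exactly right and is what the paper does. But you have buried a one-line construction under unnecessary machinery, and your description of the inverse reveals a genuine confusion.

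The Glaisher detour is a red herring: the lemma is about $\pi_{ped}(n)$ directly, so there is no need to pass through $4$-regular partitions at all. For the even-largest case, your ``replace $2t$ by $2t-1$ and $1$, then remove a $1$'' is just ``subtract $1$ from the largest part''. That is the entire map, and it works immediately: since even parts are distinct in a ped partition, $\lambda_2\le\lambda_1-1$, so $(\lambda_1-1,\lambda_2,\dots,\lambda_k)$ is a valid partition with odd largest part and still-distinct even parts, hence lies in $\pi_{DE1}(n-1)$. The paper's proof is literally this two-case split: identity if $\lambda_1$ is odd, $\lambda_1\mapsto\lambda_1-1$ if $\lambda_1$ is even.

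Your worry about invertibility is misplaced, and your proposed inverse is wrong as stated. The codomain is the \emph{disjoint} union $\pi_{DE1}(n)\sqcup\pi_{DE1}(n-1)$, and the two pieces are distinguished by total size, not by internal structure. Given $\mu$ of size $n$, it came from the identity; given $\mu$ of size $n-1$, add $1$ to $\mu_1$ (the new part $\mu_1+1$ is even and strictly exceeds every other part, so distinctness of even parts is automatic). There is no ``artificial unit'' to detect and no fuse/unfuse ambiguity. Your paragraph about checking, for a DE1 partition of $n$, ``whether it arose from an even largest part'' is solving a nonexistent problem: every element of $\pi_{DE1}(n)$ arises from the odd-largest case, full stop.
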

\begin{proof}
We define $\phi_1$ as follow:
for $n\ge1$, $\lambda=(\lambda_1,\lambda_2,\ldots,\lambda_k)$ $\in$ $\pi_{ped}$($n$), we divide $\lambda$ into two cases.

\textit{Case} 1:\; $\lambda_1$ is odd. Then $\phi_1(\lambda)=(\lambda_1,\lambda_2,\ldots,\lambda_k)$ $\in$ $\pi_{DE1}$($n$).

\textit{Case} 2:\; $\lambda_1$ is even. Then $\phi_1(\lambda)=(\lambda_1-1,\lambda_2,\ldots,\lambda_k)$ $\in$ $\pi_{DE1}$($n-1$), since the even parts of \text{ped}($n$) are distinct.

Obviously, $\phi_1$ is an injection. Now, we prove $\phi_1$ is a surjection. It suffices to construct a  map $\psi_1$ defined on $\pi_{DE1}(n)\cup\pi_{DE1}(n-1)$ such that for all $\lambda$ $\in$ $\pi_{ped}$($n$), we have $\psi_1(\phi_1(\lambda))=\lambda$  and for all $\mu$ $\in$ $\pi_{DE1}(n)\cup\pi_{DE1}(n-1)$, we have $\phi_1(\psi_1(\mu))=\mu$.

For $\mu=(\mu_1, \mu_2,\ldots,\mu_l)$ $\in$ $\pi_{DE1}(n)\cup\pi_{DE1}(n-1)$, we construct $\psi$ as follow:

\textit{Case} 1:\;$|\mu|=\mu_1+\cdots+\mu_l=n$, then $\psi_1(\mu)=(\mu_1, \mu_2,\ldots,\mu_l)\in \pi_{ped}(n)$.

\textit{Case} 2:\;$|\mu|=\mu_1+\cdots+\mu_l=n-1$, then $\psi_1(\mu)=\mu=(\mu_1+1, \mu_2,\ldots,\mu_l)\in\pi_{ped}(n)$.

By the constructions of $\phi_1$ and $\psi_1$, it is straightforward to check that $\psi_1(\phi_1(\lambda))=\lambda$ and $\phi_1(\psi_1(\mu))=\mu$, we complete the proof of Lemma \ref{lem-DE1}.
\end{proof}

Substituting the generating functions of \text{DE1}($n$) and \text{ped}($n$), \eqref{theorem1} holds.
\end{proof}

\begin{proof}[Proof of Theorem \emph{1.3}]
\begin{lemma}\label{lem-DE3}
For $n>0$, there exists a bijection $\phi_3: \pi_{ped}(n)\rightarrow \pi_{DE3}(n+2)\cup\pi_{DE3}(n-1)$ such that $\text{DE}3(n+2)+\text{DE}3(n-1)=\text{ped}(n)$, i.e. $\text{DE}3(n+2) + \text{DE}3(n-1)$ equals the number of $4$-regular partitions of $n$.
\end{lemma}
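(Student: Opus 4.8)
The plan is to follow the template of Lemma \ref{lem-DE1}: split $\pi_{ped}(n)$ into a few cases determined by the shape of $\lambda$ near its largest part, send each case to a DE3 partition of $n+2$ or of $n-1$, and write down an explicit inverse. The one genuinely new feature, compared with $\phi_1$, is that membership in $\pi_{DE3}$ requires the largest part to be odd \emph{and to occur exactly once}; so the ``add $2$ at the top'' and ``subtract $1$ at the top'' moves must be refined so that they never create a repeated largest part.

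Concretely, for $\lambda=(\lambda_1,\dots,\lambda_k)\in\pi_{ped}(n)$ let $L$ be the largest odd part of $\lambda$ when one exists, and define $\phi_3$ as follows. If $L$ exists and $L\ge\lambda_1-1$ --- equivalently, either $\lambda_1$ is odd (so $L=\lambda_1$) or $\lambda_1$ is even with $\lambda_2=\lambda_1-1$ --- replace one copy of $L$ by $L+2$; the new largest part is $L+2$ (which is $\lambda_1+2$ in the first subcase and $\lambda_1+1$ in the second), it is odd, it occurs exactly once, and no even part has been touched, so $\phi_3(\lambda)\in\pi_{DE3}(n+2)$. Otherwise every odd part of $\lambda$ is $\le\lambda_1-3$ (or $\lambda$ has no odd part), which forces $\lambda_1$ even and $\lambda_2\le\lambda_1-2$; in that case replace $\lambda_1$ by $\lambda_1-1$, so that the new largest part $\lambda_1-1$ is odd, strictly exceeds $\lambda_2$ (hence occurs once), and the even parts remain distinct, giving $\phi_3(\lambda)\in\pi_{DE3}(n-1)$. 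One then notes that the two branches feeding $\pi_{DE3}(n+2)$ have disjoint images: the ``$\lambda_1$ odd'' branch produces $\mu$ with $\mu_2\le\mu_1-2$, while the ``$\lambda_1$ even'' branch produces $\mu$ with $\mu_2=\mu_1-1$.

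To get surjectivity I would exhibit the inverse $\psi_3$ on $\pi_{DE3}(n+2)\cup\pi_{DE3}(n-1)$: for $\mu=(\mu_1,\dots,\mu_l)\in\pi_{DE3}(n+2)$ replace $\mu_1$ by $\mu_1-2$ and reorder (legitimate since $|\mu|=n+2\ge 3$ forces the odd largest part $\mu_1$ to be $\ge 3$), and for $\mu\in\pi_{DE3}(n-1)$ replace $\mu_1$ by $\mu_1+1$. Splitting the first rule according to whether $\mu_2$ equals $\mu_1-1$, equals $\mu_1-2$, or is $\le\mu_1-3$ shows in each instance that $\psi_3(\mu)\in\pi_{ped}(n)$, and a direct check gives $\psi_3(\phi_3(\lambda))=\lambda$ and $\phi_3(\psi_3(\mu))=\mu$. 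This yields $\text{ped}(n)=\text{DE3}(n+2)+\text{DE3}(n-1)$, with the last clause of the lemma coming from \eqref{ped}; translating this identity into generating functions via \eqref{DE3} and \eqref{ped} then gives \eqref{theorem2}.

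The main obstacle is not a clever idea but the bookkeeping needed to make the case division airtight: verifying that the partition of $\pi_{ped}(n)$ into the three branches is exhaustive and mutually exclusive, that each branch actually lands in the stated DE3 class --- the delicate point being the ``occurs exactly once'' requirement, which is precisely what dictates the threshold $L\ge\lambda_1-1$ versus $L\le\lambda_1-3$ --- and that the two branches into $\pi_{DE3}(n+2)$ have disjoint images whose union is all of $\pi_{DE3}(n+2)$, where the value of $\mu_2$ relative to $\mu_1-1$ in the image does the separating.
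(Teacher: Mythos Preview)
Your proposal is correct and constructs exactly the same bijection as the paper: the three branches you describe --- $\lambda_1$ odd (add $2$ to $\lambda_1$), $\lambda_1$ even with $\lambda_2=\lambda_1-1$ (add $2$ to $\lambda_2$), and $\lambda_1$ even with $\lambda_2\le\lambda_1-2$ (subtract $1$ from $\lambda_1$) --- are precisely the paper's Case~1, Case~2(i), and Case~2(ii), and your inverse $\psi_3$ matches the paper's as well. Your framing via the largest odd part $L$ and your finer three-way split of the $\pi_{DE3}(n+2)$ side are cosmetic differences that make the ``occurs exactly once'' verification a bit more explicit than the paper's, but the underlying argument is identical.
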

\begin{proof}
We construct $\phi_3$ as follow:
for $\lambda=(\lambda_1,\lambda_2,\ldots,\lambda_k)$ $\in$ $\pi_{ped}($n$)$, we divide $\lambda$ into two cases.

\textit{Case} 1:\; $\lambda_1$ is odd. Then $\phi_3(\lambda)=(\lambda_1+2,\lambda_2,\ldots,\lambda_{k}):=(\alpha_1,\ldots,\alpha_k)$ $\in$ $\pi_{DE3}(n+2)$, where $\alpha_1-\alpha_2\ge 2$.

\textit{Case} 2:\; $\lambda_1$ is even. Now we have two conditions,

(i) $\lambda_{2}=\lambda_1-1$. $\phi_3(\lambda)=(\lambda_2+2,\lambda_1,\lambda_3,\ldots,\lambda_k):=(\beta_1,\ldots,\beta_k)$ $\in$ $\pi_{DE3}$($n+2$) since $\lambda_{2}$ is odd, where $\beta_1-\beta_2=1$.

(ii) $\lambda_{2}<\lambda_1-1$. $\phi_3(\lambda)=(\lambda_1-1,\lambda_2,\ldots,\lambda_k )$ $\in$ $\pi_{DE3}$($n-1$) since $\lambda_{1}-1$ is unique.

It is easily to know $\phi_3$ is an injection. Now we construct a map $\psi_3$ on $\pi_{DE3}(n+2)\cup\pi_{DE3}(n-1)$ such that for all $\lambda$ $\in$ $\pi_{ped}$($n$), we have $\psi_3(\phi_3(\lambda))=\lambda$  and for all $\mu$ $\in$ $\pi_{DE3}(n+2)\cup\pi_{DE3}(n-1)$, we have $\phi_3(\psi_3(\mu))=\mu$. Then $\phi_3$ is a bijection can be proved.

We define $\psi_3$ as follow:
For $\mu=(\mu_1, \mu_2,\ldots,\mu_l)$ $\in$ $\pi_{DE3}(n+2)\cup\pi_{DE3}(n-1)$, we divede $\mu$ into three cases.

\textit{Case} 1:\;$|\mu|=\mu_1+\cdots+\mu_l=n+2$ and $\mu_1-\mu_2\ge 2$, then $\psi_3(\mu)=(\mu_1-2, \mu_2,\ldots,\mu_l)\in \pi_{ped}(n)$, since $\mu_1$ is odd.

\textit{Case} 2:\;$|\mu|=\mu_1+\cdots+\mu_l=n+2$ and $\mu_1-\mu_2=1$, then $\psi_3(\mu)=(\mu_2, \mu_1-2,\ldots,\mu_l)\in\pi_{ped}(n)$, since $\mu_1$ is odd and $\mu_2$ is even.

\textit{Case} 3:\;$|\mu|=\mu_1+\cdots+\mu_l=n-1$ , then $\psi_3(\mu)=(\mu_1+1, \mu_2,\ldots,\mu_l)\in\pi_{ped}(n)$, since $\mu_1$  is odd.

By the constructions of $\phi_3$ and $\psi_3$, it is straightforward to check that $\psi_3(\phi_3(\lambda))=\lambda$ and $\phi_3(\psi_3(\mu))=\mu$, we complete the proof of Lemma \ref{lem-DE3}.
\end{proof}

Substituting the generating functions of DE3($n$) and ped($n$), we complete the proof of  \eqref{theorem2}.
\end{proof}

\begin{proof}[Proof of Theorem \emph{1.2}]

\begin{lemma}

For $n>0$, $\text{DE}2(n) + \text{DE}2(n-3) = \text{ped}_{n>1}(n)$, i.e. DE$2$($n$)$+$DE$2$($n-3$) equals the number of 4-regular partitions of $n$ into parts each $>1$.

\end{lemma}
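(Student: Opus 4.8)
The plan is to mirror the strategy of Lemmas \ref{lem-DE1} and \ref{lem-DE3}: exhibit an explicit bijection
\[
\phi_2\colon \pi_{ped,>1}(n)\longrightarrow \pi_{DE2}(n)\cup\pi_{DE2}(n-3),
\]
where $\pi_{ped,>1}(n)$ denotes the set of $4$-regular partitions of $n$ with every part $>1$, and then invoke the generating function \eqref{DE2} together with \eqref{ped} to deduce \eqref{theorem3}. First I would record that, by \eqref{ped}, $\mathrm{ped}_{n>1}(n)$ equals the number of $4$-regular partitions of $n$ with no part equal to $1$; the factor $(1+q^3)$ on the left of \eqref{theorem3} is what forces the shift by $3$, and the ``$-1$'' on the right accounts for the empty partition, which has no largest part. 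So the combinatorial identity to prove is exactly $\mathrm{DE}2(n)+\mathrm{DE}2(n-3)=\mathrm{ped}_{n>1}(n)$ for $n>0$.

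Next I would set up $\phi_2$ by cases on the largest part $\lambda_1$ of $\lambda=(\lambda_1,\dots,\lambda_k)\in\pi_{ped,>1}(n)$, paralleling $\phi_3$ but now aiming at partitions whose largest part is odd and repeated \emph{at least twice}. If $\lambda_1$ is odd, then already $\lambda_1$ appears once; I would split into the subcase where $\lambda_1$ is repeated (so $\lambda$ is itself in $\pi_{DE2}(n)$, map it to itself) and the subcase where $\lambda_1$ occurs exactly once, where I would need to manufacture a second copy of an odd largest part while conserving the total and landing in $\pi_{DE2}(n-3)$ — e.g.\ by detaching a $3$ and recombining. If $\lambda_1$ is even, it is distinct, and I would either pair it with $\lambda_1-1$ (when that part is present) to build a doubled odd top part, or peel off an appropriate odd amount to reach $\pi_{DE2}(n-3)$; the constraint ``every part $>1$'' is precisely what guarantees these local moves stay inside the target classes (in particular that no new part $1$, and no forbidden repeated even part, is created). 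As in the earlier lemmas, I would then write down the inverse map $\psi_2$ by cases on whether $|\mu|=n$ or $|\mu|=n-3$ and on the multiplicity/gap at the top of $\mu$, and check $\psi_2\circ\phi_2=\mathrm{id}$ and $\phi_2\circ\psi_2=\mathrm{id}$ case by case.

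The main obstacle I anticipate is bookkeeping the case split so that the moves are genuinely bijective: when the largest part is odd and unrepeated, or even, one must redistribute a total of $3$ (or $-3$) in a way that (a) creates an odd largest part of multiplicity $\ge 2$, (b) never produces a part equal to $1$, (c) preserves distinctness of even parts, and (d) is reversible without ambiguity. The delicate subcases are those where the natural move would collide with an already-present part or where the second-largest part interacts with the top part (the analogue of condition $\lambda_2=\lambda_1-1$ versus $\lambda_2<\lambda_1-1$ in Lemma~\ref{lem-DE3}); getting the tie-breaking right there, and confirming the two composites are the identity on every branch, is where the real work lies. Once the bijection is verified, substituting \eqref{DE2} and the ``parts $>1$'' specialization of \eqref{ped} into $(1+q^3)\sum_{n\ge0}\mathrm{DE}2(n)q^n \cdot$ (shift) yields \eqref{theorem3} immediately.
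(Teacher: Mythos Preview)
Your proposal never actually defines the map $\phi_2$. In the two nontrivial cases (largest part odd and unrepeated; largest part even) you only say you ``would need to manufacture a second copy of an odd largest part'' or ``peel off an appropriate odd amount,'' and you explicitly flag the tie-breaking in the collision subcases as unresolved. But specifying those moves \emph{is} the lemma: without a concrete rule for where the $3$ goes (and a verification that the result has an odd largest part of multiplicity $\ge 2$, keeps even parts distinct, avoids creating a part $1$, and is invertible without ambiguity), there is no proof, only a plan. As stated, the proposal has a genuine gap exactly at the point you yourself identify as ``where the real work lies.''

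The paper avoids this difficulty entirely by \emph{not} building a new bijection. It uses the obvious disjoint decomposition $\pi_{DE1}(m)=\pi_{DE2}(m)\sqcup\pi_{DE3}(m)$, hence $\mathrm{DE}2(m)=\mathrm{DE}1(m)-\mathrm{DE}3(m)$, and then combines the two identities already established by $\phi_1$ and $\phi_3$:
\begin{align*}
\mathrm{DE}2(n)+\mathrm{DE}2(n-3)
&=\mathrm{DE}1(n)+\mathrm{DE}1(n-3)-\bigl(\mathrm{DE}3(n)+\mathrm{DE}3(n-3)\bigr)\\
&=\mathrm{DE}1(n)+\mathrm{DE}1(n-3)-\bigl(\mathrm{DE}1(n-2)+\mathrm{DE}1(n-3)\bigr)\\
&=\bigl(\mathrm{DE}1(n)+\mathrm{DE}1(n-1)\bigr)-\bigl(\mathrm{DE}1(n-1)+\mathrm{DE}1(n-2)\bigr)\\
&=\mathrm{ped}(n)-\mathrm{ped}(n-1)=\mathrm{ped}_{n>1}(n),
\end{align*}
where the second line uses Lemma~\ref{lem-DE3} at $n-2$, the fourth uses Lemma~\ref{lem-DE1} at $n$ and $n-1$, and the last step is the bijection ``remove/adjoin a part $1$'' between $\pi_{ped}(n-1)$ and the ped-partitions of $n$ containing a $1$. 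So the paper's argument is a three-line arithmetic consequence of the earlier lemmas rather than a fresh case analysis; this is precisely what sidesteps the bookkeeping you were worried about. If you still want a purely bijective argument, the safer route is to compose $\phi_1$, $\phi_3$, and the delete-a-$1$ map (together with the inclusion $\pi_{DE2}\hookrightarrow\pi_{DE1}$) rather than invent $\phi_2$ from scratch.
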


\begin{proof}
From the definitions of DE1($n$), DE2($n$) and DE3($n$), we deduce that $\text{DE}2(n)=\text{DE}1(n)-\text{DE}3(n)$. By Lemmas \ref{lem-DE1} and \ref{lem-DE3}, we have
\begin{align*}
\text{DE}1(n) + \text{DE}1(n-1) = \text{DE}3(n+2) + \text{DE}3(n-1).
\end{align*}
Then
\begin{align*}
&\text{DE}2(n)+\text{DE}2(n-3)\\
&=\text{DE}1(n)-\text{DE}3(n)+\text{DE}1(n-3)-\text{DE}3(n-3)\\
&=\text{DE}1(n)-\text{DE}1(n-2)\label{thm 2.6}\\
&=\text{DE}1(n)+\text{DE}1(n-1)-(\text{DE}1(n-1)+\text{DE}1(n-2))\\
&=\text{ped}(n)-\text{ped}(n-1)\\
&=\text{ped}_{n>1}(n).
\end{align*}
where \text{ped}($n-1$) equals the number of partitions of $n$ which $1$ appears.
\end{proof}
By the generating functions of DE2($n$) and ped($n$), we immediately obtain \eqref{theorem3}.
\end{proof}


\section{ACKNOWLEDGEMENTS}



\end{document}